\documentclass{amsart}
\usepackage{amsthm,hyperref}
\usepackage{amsmath,amssymb,MnSymbol}
\usepackage{cite}
\usepackage{tikz}
\numberwithin{equation}{section}
\newtheorem{thm}{Theorem}[section]
\newtheorem{con}[thm]{Conjecture}

\newtheorem{prop}[thm]{Proposition}
\newtheorem{problem}[thm]{Problem}

\theoremstyle{definition}

\DeclareMathOperator{\Mon}{Mon}
\DeclareMathOperator{\lex}{lex}
\DeclareMathOperator{\Sym}{Sym}
\DeclareMathOperator{\Ann}{Ann}

\begin{document}
\title[Quadratic Ideals and EGH in Six Variables]{Quadratic Ideals in Six Variables and the Eisenbud--Green--Harris Conjecture}
\author[A.Abedelfatah]{Abed Abedelfatah}

\address{Department of Mathematics, Braude College of Engineering Karmiel, 2161002 Karmiel, Israel}
\email{abed@braude.ac.il}
\keywords{Hilbert function, Eisenbud-Green-Harris Conjecture, Regular sequence.}
\begin{abstract}
In this paper, we study the Eisenbud--Green--Harris (EGH) conjecture
for ideals generated by quadrics.
We establish a sharp lower bound for the dimension of the cubic component
of an ideal generated by a regular sequence of six quadrics and two
additional quadrics in six variables.
Furthermore, we prove the Eisenbud--Green--Harris conjecture for almost
complete intersections of quadrics in six variables.
\end{abstract}

\maketitle

\section{Introduction}\label{sec:intro}
The Eisenbud-Green-Harris (EGH) conjecture states that a graded ideal in a polynomial ring $K[x_1,\dots,x_n]$ over a field $K$ that contains a regular sequence $f_1,\dots,f_n$ with degrees $2\leq a_1\leq \cdots \leq a_n$, has the same Hilbert function as a graded ideal containing $x_1^{a_1},\dots,x_n^{a_n}$. 

The Eisenbud-Green-Harris (EGH) conjecture extends Macaulay's theorem,  
which characterizes possible Hilbert functions of graded ideals through  
lexicographic (lex) ideals.  

The conjecture is inspired by the Cayley-Bacharach Theorem and has  
significant implications across various fields of mathematics.  

If proven true, it would support the generalized Cayley-Bacharach conjecture  
in algebraic geometry, expand the Clements-Lindström theorem in combinatorics,  
and provide strong evidence for the Lex-plus-powers conjecture in commutative algebra.  

By linking geometric properties, combinatorial principles, and algebraic structures,  
the EGH conjecture offers a comprehensive framework for understanding the behavior  
of homogeneous ideals and their Hilbert functions.

The conjecture is known true when $n=2$ \cite{richert} (the case $n=1$ is trivial). To date, the conjecture is widely open. See \cite{gunt1} for a good survey on the EGH conjecture.

The following problem was proposed by Chen \cite{chen} and has attracted considerable attention,
as it is closely related to the EGH conjecture.

\begin{problem}[Problem~3.6,\cite{chen}]\label{prob:chen36}
Let $S = K[x_1,\dots,x_n]$ be a polynomial ring and $f_1,\dots,f_n \in S_2$ form a regular sequence of quadrics with $n \geq 3$,
and let $g,h \in S_2$ be two quadrics such that
\[
\dim_K(f_1,\dots,f_n,g,h)_2 = n+2.
\]
Is it true that
\[
\dim_K(f_1,\dots,f_n,g,h)_3 \;\geq\;
\dim_K(x_1^2,\dots,x_n^2, x_1x_2, x_1x_3)_3 \;=\; n^2+2n-5 \, ?
\]
\end{problem}

In Section~\ref{sec:prelim}, we introduce the necessary background, notation,
and preliminary results that will be used throughout the paper. 

In Section~\ref{sec:discussion}, we examine previous approaches related to
Problem~\ref{prob:chen36}. In particular, we show that the attempted proof in \cite{gunt2}
relies on Proposition~3.12, which is flawed, and therefore does not resolve Chen’s question. 
On the other hand, we recall that the case $n=5$ of the EGH conjecture
was correctly established in \cite[Theorem 4.14]{caviglia}. 

In Section~\ref{sec:mainproof}, we present our new contributions:
first, we give a complete solution to Problem~\ref{prob:chen36} when $n=6$;
second, we prove the Eisenbud--Green--Harris conjecture for almost complete intersections
of quadrics in six variables.
\section{Preliminaries and notations}\label{sec:prelim}
A proper ideal $I$ in $S=K[x_1,\dots,x_n]$ is called \emph{graded} or \emph{homogeneous} if it has a system of homogeneous generators. We define the Hilbert function of $I$ as 
\[
\mathcal{H}_{S/I}(t)=\dim_K(S_t/I_t)=\dim_K S_t-~\dim_K I_t. 
\]
where $S_t$ and $I_t$ are the graded $t$ components of $S$ and $I$, respectively.

For simplicity, we denote the dimension of a $K$-vector space $V$ by $|V|$ instead of $\dim_{K}V$. For a $K$-vector space $V\subseteq S_d$, where $d\geq 0$, we denote by $S_1V$ the $K$-vector space spanned by $\{x_iv:~1\leq i\leq n~\wedge~v\in V\}$. Throughout this paper $\textbf{\underline{a}}=(a_1,\dots,a_n)\in \mathbb{Z}^n$, where $2\leq a_1\leq\cdots\leq a_n$.

We define the \emph{lex order} on $\Mon(S)$ by setting $\textbf{x}^b=x_1^{b_1}\cdots x_n^{b_n}<_{\lex}x_1^{c_1}\cdots x_n^{c_n}=\textbf{x}^c$ if either $\deg(\textbf{x}^b)<\deg(\textbf{x}^c)$ or $\deg(\textbf{x}^b)=\deg(\textbf{x}^c)$ and $b_i<c_i$ for the first index $i$ such that $b_i\neq c_i$. We recall the definitions of lex ideal and lex-plus-powers ideal.
A graded ideal is called \emph{monomial} if it has a system of monomial generators. A monomial ideal $I\subseteq S$ is called \emph{lex}, if whenever $I\ni z<_{\lex}w$, where $w,z$ are monomials of the same degree, then $w\in I$.
A monomial ideal $I$ is \emph{$\textbf{\underline{a}}$-lex-plus-powers} if there exists a lex ideal $L$ such that $I=\langle x_1^{a_1},\dots,x_n^{a_n}\rangle+L$.

In 1927, F.Macaulay proved that if $I$ is a graded ideal in $S$, then there exists a lex ideal $L$ such that $L$ has the same Hilbert function as $I$ \cite{macaulay}; i.e., every Hilbert function in $S$ is attained by a lex ideal. In \cite{clements}, Clements and Lindstr{\"o}m proved that every Hilbert function in $S/\langle x_1^{a_1},\dots,x_n^{a_n}\rangle$ is attained by an $\textbf{\underline{a}}$-lex-plus-powers ideal.

Let $f_1,\dots,f_n$ be a regular sequence in $S$ such that $2\leq a_1=\deg(f_1)\leq \cdots\leq a_n=\deg(f_n)$. A well-known result says that $\langle f_1,\dots,f_n\rangle$ has the same Hilbert function as $\langle x_1^{a_1},\dots,x_n^{a_n}\rangle$. It is natural to ask what happens if $I\subseteq S$ is a homogeneous ideal containing a regular sequence in fixed degrees. This question brings us to the Eisenbud-Green-Harris conjecture, denoted by EGH.

\begin{con}[EGH \cite{eisenbud}]\label{1}{\ \\}
If $I$ is a homogeneous ideal in $S$ containing a regular sequence $f_1,\dots,f_n$ of degrees $\deg(f_i)=a_i$, where $2\leq a_1\leq\cdots\leq a_n$, then $I$ has the same Hilbert function as an ideal containing $x_1^{a_1},\dots,x_n^{a_n}$.
\end{con}

Since the Hilbert function of a monomial ideal is independent of the field $K$,
by an extension of $K$ we may assume that $K$ is infinite and algebraically closed.\\
By Clements-Lindstr{\"o}m's theorem, the ideal $I$ has the same Hilbert function as an ideal containing $x_1^{a_1},\dots,x_n^{a_n}$ if and only if for all $t\geq 0$, there is an ideal $L$ containing $x_1^{a_1},\dots,x_n^{a_n}$ such that $|L_t|=|I_t|$ and $|L_{t+1}|\leq |I_{t+1}|$.
Note that if $I$ is a graded ideal containing a regular sequence $f_1,\dots,f_n$, then $I_t=S_t$ for all $t> N=\sum_{i=1}^{n}(a_i-1)$ and $I$ satisfies the EGH Conjecture if and only if, for all $0\leq t\leq N$, there is an ideal $L$ containing $x_1^{a_1},\dots,x_n^{a_n}$ such that $|I_t|=|L_t|$ and $|L_{t+1}|\leq |I_{t+1}|$.

In our argument, we will use the following results of the author.

\begin{prop}[Proposition~3.1,\cite{abed2}]\label{2}{\ \\}
Let $I$ be a graded ideal in $S$ containing $P$. Then $I$ satisfies the EGH conjecture if and only if $(P:I)$ satisfies the EGH Conjecture.
\end{prop}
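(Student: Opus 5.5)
The plan is to reduce everything to the monomial complete intersection through linkage, using that $P=\langle f_1,\dots,f_n\rangle$ is a Gorenstein complete intersection with the same Hilbert function as $\langle x_1^{a_1},\dots,x_n^{a_n}\rangle$. Throughout, $N=\sum_i (a_i-1)$ is the socle degree of $S/P$.

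\textbf{Step 1 (duality of Hilbert functions).} Since $S/P$ is Gorenstein with socle degree $N$, linkage gives, for every $t$,
\[
|(P:I)_t| - |P_t| \;=\; \mathcal{H}_{S/I}(N-t).
\]
To prove this, use the perfect pairing $(S/P)_t\times(S/P)_{N-t}\to (S/P)_N\cong K$. Under this pairing, $(P:I)/P$ in degree $t$ is exactly the annihilator of $(I/P)_{N-t}$. The same formula holds for the monomial complete intersection $Q=\langle x_1^{a_1},\dots,x_n^{a_n}\rangle$ and any monomial ideal $L\supseteq Q$, with $(Q:L)$ monomial.

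\textbf{Step 2 (the criterion).} By Clements--Lindstr\"om, $I$ satisfies EGH if and only if, for each $0\le t\le N$, there is an $\mathbf{\underline a}$-lex-plus-powers ideal $L$ with $|L_t|=|I_t|$ and $|L_{t+1}|\le |I_{t+1}|$. I will reformulate this as a statement about $S/P$: the Hilbert function of $I/P$ is an ``$\mathbf{\underline a}$-O-sequence''. This means that the growth of $|I_t|-|P_t|$ to $|I_{t+1}|-|P_{t+1}|$ is at least the minimal growth that a lex-plus-powers segment of the same size achieves in $S/Q$.

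\textbf{Step 3 (transport across the duality).} Suppose $I$ satisfies EGH, so that $I$ and some monomial $L\supseteq Q$ share a Hilbert function. Then Step 1 shows that $(P:I)$ and $(Q:L)$ also share a Hilbert function, and $(Q:L)\supseteq Q$. Hence $(P:I)$ satisfies EGH.

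For the converse, apply the same argument to $J=(P:I)$. This uses $(P:(P:I))=I$, which is the double-annihilator property in the Artinian Gorenstein ring $S/P$: there $\Ann(\Ann(I/P))=I/P$, which follows from the perfect pairing by a dimension count.

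This also disposes of the monomial side, since $(Q:(Q:L))=L$ for the same reason. So the correspondence $L\mapsto (Q:L)$ is a bijection on monomial ideals containing $Q$, and it preserves the Hilbert-function relation of Step 1.

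\textbf{Main obstacle.} The delicate point is Step 1 in the non-monomial setting. I need to verify that the pairing is perfect in each degree, i.e. that $S/P$ is Gorenstein with one-dimensional socle in degree $N$. This is standard for a complete intersection, via the Koszul complex or the local duality computation.

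Once this is settled, the rest is formal. The step also requires that $(Q:L)$ contains $Q$, which is automatic. It does not require $(Q:L)$ to be lex-plus-powers: EGH asks only for some ideal containing the powers, and Clements--Lindstr\"om then supplies a lex-plus-powers one.
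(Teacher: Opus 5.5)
Your argument is correct, and it is the standard liaison proof behind this result; the paper itself only cites it rather than proving it. Your Step~1, after the symmetry $\mathcal{H}_A(t)=\mathcal{H}_A(N-t)$, is exactly Proposition~\ref{6}, applied once to $P\subseteq I$ and once to $Q\subseteq L$; the converse follows from $P:(P:I)=I$, or more simply from the fact that this formula is symmetric in $I$ and $P:I$. Step~2 is never used and can be dropped, and $L$ need not be monomial, since the same duality holds for any ideal containing $Q$.
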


\begin{thm}[Theorem~3.2,\cite{abed}]\label{3}{\ \\}
Let $I$ be a graded ideal in $S$ containing a regular sequence $$f_1,~\dots,f_{n-1},~f_n=q_1\cdots q_s$$ of degrees $\deg(f_i)=a_i$ such that $q_i\in S_1$ for all $1\leq i\leq s$. If the image of the sequence $f_1,\dots,f_{n-1}$ in $S/\langle q_i\rangle$ satisfies the EGH conjecture for all $i$, then $I$ has the same Hilbert function as a graded ideal in $S$ containing $x_1^{a_1},\dots,x_n^{a_n}$.
\end{thm}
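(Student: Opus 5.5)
The plan is to induct on $s$, peeling off one linear factor of $f_n$ at a time. After a linear change of coordinates (harmless for Hilbert functions) assume $q_1=x_n$. For every $t$ the exact sequence
\[
0\to \bigl(S/(I:q_1)\bigr)(-1)\xrightarrow{\cdot q_1} S/I\to S/(I+\langle q_1\rangle)\to 0
\]
gives
\[
\mathcal H_{S/I}(t)=\mathcal H_{S/(I+\langle q_1\rangle)}(t)+\mathcal H_{S/(I:q_1)}(t-1).
\]
So the Hilbert function of $I$ is determined by those of $I+\langle q_1\rangle$ and $I:q_1$, and I will control each piece.

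\textbf{Step 1: the quotient by $q_1$.} Since $f_1,\dots,f_{n-1},q_1\cdots q_s$ is a regular sequence, so is $f_1,\dots,f_{n-1},q_1$. Hence the images $\bar f_1,\dots,\bar f_{n-1}$ form a regular sequence in $\bar S=S/\langle q_1\rangle\cong K[x_1,\dots,x_{n-1}]$. By hypothesis they satisfy EGH there. Therefore $(I+\langle q_1\rangle)/\langle q_1\rangle$ has the Hilbert function of some lex-plus-powers ideal $A\subseteq K[x_1,\dots,x_{n-1}]$ containing $x_1^{a_1},\dots,x_{n-1}^{a_{n-1}}$.

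\textbf{Step 2: the colon ideal.} The ideal $I:q_1$ contains $f_1,\dots,f_{n-1}$ and $q_2\cdots q_s$, which again form a regular sequence, now with last entry a product of $s-1$ linear forms of degree $a_n-1$. For each $i\ge 2$ the hypothesis on the images modulo $q_i$ is the same as before. By induction on $s$, $I:q_1$ therefore has the Hilbert function of an ideal containing $x_1^{a_1},\dots,x_{n-1}^{a_{n-1}},x_n^{a_n-1}$. The base case $s=1$ is trivial: then $q_1\in I$, so $I:q_1=S$.

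One subtlety is that $a_n-1$ may be smaller than $a_{n-1}$. To handle this, I would prove the statement for an arbitrary (unsorted) degree vector. This costs nothing, since Clements--Lindstr\"om does not require the exponents to be ordered, and the inductive argument never uses the ordering. Iterating, it is cleanest to record $B_j$ for the Hilbert function data of $(I:q_1\cdots q_j)+\langle q_{j+1}\rangle$ modulo $q_{j+1}$, for $j=0,\dots,s-1$. Each $B_j$ is realized, by the hypothesis for the index $j+1$, by a lex-plus-powers ideal $L_j\subseteq K[x_1,\dots,x_{n-1}]$ containing $x_1^{a_1},\dots,x_{n-1}^{a_{n-1}}$.

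\textbf{Step 3: assembling a monomial ideal.} Define
\[
L=\sum_{j=0}^{s-1}x_n^{j}L_jS+\langle x_n^{s}\rangle .
\]
Iterating the additivity formula shows that $\mathcal H_{S/L}=\mathcal H_{S/I}$. Moreover $L$ contains $x_1^{a_1},\dots,x_{n-1}^{a_{n-1}}$ and $x_n^{a_n}$ (recall $s=a_n$). The main obstacle is showing that $L$ really is an ideal with the claimed graded pieces, i.e.\ $x_nL_j\subseteq x_nL_{j+1}$, which needs $L_j\subseteq L_{j+1}$.

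Since the $L_j$ are lex-plus-powers for the same exponent vector, they are nested as soon as $|(L_j)_t|\le |(L_{j+1})_t|$ in each degree. So it suffices to prove the dimension inequality
\[
\bigl|\bigl((I:q_1\cdots q_j)+\langle q_{j+1}\rangle\bigr)_t/\langle q_{j+1}\rangle_t\bigr|\le \bigl|\bigl((I:q_1\cdots q_{j+1})+\langle q_{j+2}\rangle\bigr)_t/\langle q_{j+2}\rangle_t\bigr|.
\]
I would attack this by moving both sides into a common quotient: first restrict to a general linear form, using that $K$ is infinite, and then compare. The comparison would use that $I:q_1\cdots q_j\subseteq I:q_1\cdots q_{j+1}$ together with Green-type hyperplane restriction. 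If a direct comparison fails, the fallback is to replace the $L_j$ by lex-plus-powers ideals realizing the same total dimensions but chosen greedily to be nested. Clements--Lindstr\"om growth bounds would then be used to check that no degree is overcounted.
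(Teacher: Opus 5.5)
You have correctly located the difficulty, but the step you flag as the ``main obstacle'' is not just unproved: as stated, it is false. The paper does not reprove this statement (it is quoted from \cite{abed}), so your argument has to stand on its own.

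Your decomposition $\mathcal{H}_{S/I}(t)=\sum_{j=0}^{s-1}\mathcal{H}_{S/(J_j+(q_{j+1}))}(t-j)$, with $J_j=I:q_1\cdots q_j$, is correct, and each piece is realized by an $(a_1,\dots,a_{n-1})$-lex-plus-powers ideal $L_j$. However, $L=\sum_j x_n^jL_jS+(x_n^s)$ has the right Hilbert function only if $L_0\subseteq L_1\subseteq\cdots\subseteq L_{s-1}$, that is, only if your displayed inequality holds. That inequality compares sections of two different ideals by two different hyperplanes, whereas $J_j\subseteq J_{j+1}$ only compares sections by the same hyperplane. For an arbitrary ordering of the factors the inequality fails even under the theorem's hypotheses.

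Take $S=K[x,y]$, $f_1=x^4+y^4$, $f_2=x^3y$ with $(q_1,q_2,q_3,q_4)=(x,x,x,y)$, and $I=(x^3y,\,x^2y^2,\,x^4+y^4)$. Here $a_1=a_2=4$, and the hypothesis on the images of $f_1$ is trivially satisfied. One computes $I:x^2=(xy,y^2,x^4)$ and $I:x^3=(y,x^3)$. So for $j=2$ and $t=2$, the left side of your inequality is $\dim_K(y^2)_2=1$ in $K[y]$, while the right side is $\dim_K(x^3)_2=0$ in $K[x]$. Concretely, $L_0=L_1=(x_1^4)$, $L_2=(x_1^2)$, $L_3=(x_1^3)$, and your $L$ collapses to $(x_1^4,\,x_1^2x_2^2,\,x_2^4)\subseteq K[x_1,x_2]$. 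This ideal has $\mathcal{H}_{S/L}(5)=0$, whereas $\mathcal{H}_{S/I}(5)=1$. With the ordering $(y,x,x,x)$ the pieces do happen to be nested in this example. So a genuinely new ingredient is needed, such as a controlled choice of ordering or a way of gluing pieces that does not require nestedness, and the proposal supplies none.

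The remedies you suggest do not provide that ingredient. Green's hyperplane restriction theorem gives an upper bound for the Hilbert function of the restriction to a general linear form. It says nothing comparing restrictions to the two specific forms $q_{j+1}$ and $q_{j+2}$, and the EGH hypothesis is available only for those specific quotients.

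The ``greedy'' fallback changes the Hilbert functions of the individual pieces. You would then have to show that the new pieces are still Hilbert functions of ideals containing $x_1^{a_1},\dots,x_{n-1}^{a_{n-1}}$, and that the glued ideal has the Hilbert function of $I$. A sum of shifted admissible Hilbert functions need not be admissible, so this needs exactly the cross-piece information that is missing.

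Two smaller points:
\begin{enumerate}
\item The induction on $I:q_1$ in Step 2 is never used in Step 3.
\item Clements--Lindstr\"om does require $a_1\le\cdots\le a_n$ for lex-plus-powers representatives. For example, $(x_2,x_1^3)\supseteq(x_1^3,x_2^2)$ has Hilbert function $1,1,1,0$, which no $(3,2)$-lex-plus-powers ideal with $x_1>x_2$ attains. This is harmless after permuting variables.
\end{enumerate}
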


We denote by $P$ the ideal generated by a regular sequence $f_1,f_2,\dots,f_n$ of degrees $a_1,\dots,a_n$ and let $A$ be the graded Gorenstein Artin $K$ algebra $S/P$. We also need the following useful proposition on Hilbert functions under liaison.

\begin{prop}[Theorem~3,\cite{Davis}]\label{6}{\ \\}
Let $I$ be a graded ideal in $S$ containing $P$ and $N=\sum_{i=1}^{n}(a_i-1)$, then for all $0\leq j\leq N$
\[
\mathcal{H}_{S/I}(j)=\mathcal{H}_A(j)-\mathcal{H}_{S/(P:I)}(N-j)
\]
\end{prop}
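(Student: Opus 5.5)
We will use only the Gorenstein duality of $A=S/P$. Since $f_1,\dots,f_n$ is a regular sequence, $A$ is a graded complete intersection, hence Gorenstein Artinian. Its Hilbert series is $\prod_{i=1}^n(1+t+\dots+t^{a_i-1})$, so its top degree is $N=\sum(a_i-1)$, and the socle is one-dimensional and equal to $A_N\cong K$.

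\textbf{Step 1: perfect pairing.} First we record the standard fact that for each $0\le j\le N$ multiplication
\[
A_j\times A_{N-j}\to A_N\cong K
\]
is a perfect pairing. Take a nonzero $x\in A_{j}$. Then $A_1^{m}x\neq 0$ for some $m$, because otherwise $x$ itself, or some nonzero element of $A_{\ge 1}^{m-1}x$, lies in the socle. The socle is $A_N$, so some product of $x$ with elements of $A_1$ lands nonzero in degree $N$; that is, some $z\in A_{N-j}$ has $xz\neq 0$. The same argument applies with the roles of $j$ and $N-j$ exchanged, so the pairing is nondegenerate on both sides.

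\textbf{Step 2: the annihilator in a fixed degree.} Put $J=I/P\subseteq A$. Then $(P:I)/P=\Ann_A(J)$. We claim
\[
\Ann_A(J)_{N-j}=\{x\in A_{N-j}: xJ_j=0\}=J_j^{\perp}.
\]
The inclusion $\subseteq$ is clear. For the reverse, let $x\in A_{N-j}$ with $xJ_j=0$, and let $y\in J_k$. If $k>j$, then $xy\in A_{N-j+k}=0$. If $k\le j$ and $xy\neq 0$, Step 1 gives $z\in A_{j-k}$ with $xyz\neq 0$. But $yz\in J_j$, so $xyz=0$, a contradiction. Hence $xJ=0$.

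\textbf{Step 3: count dimensions.} By perfectness, $|J_j^{\perp}|=|A_{N-j}|-|J_j|$. Therefore
\[
\mathcal{H}_{S/(P:I)}(N-j)=|A_{N-j}|-|\Ann_A(J)_{N-j}|=|J_j|=\mathcal{H}_A(j)-\mathcal{H}_{S/I}(j),
\]
which is the stated formula.

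The only step needing real care is Step 2, namely that the degree-$j$ orthogonality condition already forces annihilation of all of $J$. It is handled by multiplying up into degree $j$ and using perfectness once more. The rest is linear algebra.
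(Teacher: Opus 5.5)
Your proof is correct. The paper gives no proof of this statement; it quotes it as Theorem~3 of the cited work of Davis et al., so there is no in-paper argument to compare against.

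What you wrote is the standard duality argument behind that citation. Its two ingredients are:
\begin{itemize}
\item the perfect pairing $A_j\times A_{N-j}\to A_N\cong K$ of the Artinian Gorenstein algebra $A$;
\item the identification $\Ann_A(J)_{N-j}=J_j^{\perp}$, which you justify correctly by multiplying $xy$ back up into degree $j$ of $J$ and using nondegeneracy again.
\end{itemize}
Your argument only uses that $A$ is standard graded and Artinian with socle equal to $A_N$. So it proves the formula for any graded Artinian Gorenstein quotient $S/P$ of socle degree $N$, not just complete intersections. In exchange for spelling out the linear algebra, you get a self-contained proof in the full generality of the liaison statement.

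One wording fix is needed in Step~1. The sentence ``$A_1^m x\neq 0$ for some $m$'' is vacuous as written, since $m=0$ works. What you mean is this: take $m$ maximal with $A_1^m x\neq 0$. Any nonzero element of $A_1^m x$ is killed by $A_1$, hence by $A_+=A_1A$, so it lies in the socle $A_N$. This forces $m=N-j$, and therefore $A_{N-j}x\neq 0$.
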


\section{Analysis of Proposition~3.12 from \cite{gunt2}} \label{sec:discussion}
In this section, we examine Proposition~3.12 from \cite{gunt2}, which concerns the existence of a quadric with a nontrivial linear annihilator in the context of the Eisenbud–Green–Harris conjecture.
While the proposition supports the conjecture in a specific setting, we identify a step in the proof that requires further justification. We present an explicit construction showing that the conclusion does not hold in general, thus suggesting that the result may need to be refined.

\subsection*{Gap in Proposition~3.12 of \cite{gunt2}}

Let \( f, g \in A_2 \) where 
\[
A = S/P, \quad S = K[x_1, \dots, x_n],
\]
and \( P = (f_1, \dots, f_n) \subset S \) is an ideal generated by a regular sequence 
of quadrics \( f_1, \dots, f_n \).  
Assume that \( \dim(fA_1) = \dim(gA_1) = n \).  
Then \cite{gunt2}, Proposition~3.12 asserts:

\emph{There exists a combination \( 0\neq h \in \langle f, g \rangle_2 \) such that
\(\dim Ann_{A_1}(h) \ge 1\).}

However, this proposition does not hold under the stated assumptions. 
To see this concretely, consider the ring
\[
A = K[x_1, x_2, x_3, x_4, x_5]/(x_1^2, x_2^2, x_3^2, x_4^2, x_5^2)
\]
and the quadratic forms
\[
g = x_1x_2 + x_3x_4, \qquad
h = x_5x_1 + x_2x_3.
\]
Let 
\[
q = \alpha g + \beta h \in A_2,
\]
where \( \alpha, \beta \in K \) are not both zero. 
Suppose that there exists a nontrivial linear form
\[
\ell = c_1x_1 + c_2x_2 + c_3x_3 + c_4x_4 + c_5x_5 \in A_1
\]
such that 
\[
\ell q = 0 \quad \text{in } A.
\]

Expanding \(\ell q\) and discarding all terms with \(x_i^2 = 0\), 
one obtains the system:
\begin{align*}
x_1x_2x_3: & \quad \alpha c_3 + \beta c_1 = 0, \\
x_1x_2x_4: & \quad \alpha c_4 = 0, \\
x_1x_2x_5: & \quad \alpha c_5 + \beta c_2 = 0, \\
x_1x_3x_4: & \quad \alpha c_1 = 0, \\
x_1x_3x_5: & \quad \beta c_3 = 0, \\
x_1x_4x_5: & \quad \beta c_4 = 0, \\
x_2x_3x_4: & \quad \alpha c_2 + \beta c_4 = 0, \\
x_2x_3x_5: & \quad \beta c_5 = 0, \\
x_3x_4x_5: & \quad \alpha c_5 = 0.
\end{align*}

If \( \alpha \neq 0 \), the equations immediately give
\( c_1 = c_2= c_3 = c_4 = c_5 = 0 \).  
Similarly, if \( \beta \neq 0 \), then
\( c_1=c_2=c_3 = c_4 = c_5 = 0 \). 
In all cases, 
\[
c_1 = c_2 = c_3 = c_4 = c_5 = 0,
\]
so that \( \ell = 0 \). 

This contradicts the conclusion of Proposition~3.12 in \cite{gunt2}.  

The proof of Proposition~3.12 implicitly identifies the two image spaces
\(fA_1\) and \(gA_1\). This identification is not justified by the stated
hypotheses. The equality \(fA_1=gA_1\) appears explicitly in Lemma~3.10 of
the same paper. Under this additional hypothesis, the argument gives the
following corrected statement:

\textit{
Let \( f, g \in A_2 \) with \( fA_1 = gA_1 \) and \( \dim(fA_1) = n \). 
Then there exists \( 0\neq h \in \langle f, g \rangle_2 \) such that 
\( \dim Ann_{A_1}(h) \ge 1 \).
}

This gap affects the argument based on Proposition~3.12 in \cite{gunt2}. Nevertheless, the Eisenbud--Green--Harris conjecture for five quadrics was independently established in \cite[Theorem~4.14]{caviglia}.

\section{New results for the case $n=6$} \label{sec:mainproof}
We present two results for the case $n=6$. First, we solve Problem~\ref{prob:chen36} by establishing a sharp lower bound on the dimension in degree $3$. Then, we prove the Eisenbud–Green–Harris conjecture for almost complete intersections of quadrics.
We begin by recalling the following result due to Francisco (see Corollary 5.2 of \cite{fran}).

\begin{prop}\label{4}
Let $I=(f_1,\dots,f_n,g)\subseteq S=K[x_1,\dots,x_n]$, where
$f_1,\dots,f_n$ form a regular sequence of quadrics and
$2\leq d=\deg(g)\leq n$. Set $P=(f_1,\dots,f_n)$. Then
\[
\dim_K I_{d+1}\geq
\dim_K\bigl(x_1^2,\dots,x_n^2,x_1x_2\cdots x_d\bigr)_{d+1}
=
\binom{n+d}{d+1}-\binom{n}{d+1}+n-d.
\]
Equivalently,
\[
\dim_K\bigl(S_1g\cap P_{d+1}\bigr)\leq d.
\]
\end{prop}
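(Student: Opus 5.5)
We first check the equivalence and the monomial count, and then prove the inequality $\dim_K(S_1g\cap P_{d+1})\le d$ by a socle-degree argument in a quotient of $A=S/P$.

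\textbf{Equivalence and the monomial side.} We assume $g\notin P_d$; otherwise $I=P$ and the statement is vacuous or false. Since $g\neq 0$ and $S$ is a domain, $|S_1g|=n$. Hence
\[
|I_{d+1}|=|P_{d+1}|+n-|S_1g\cap P_{d+1}| .
\]
Because $P$ and $(x_1^2,\dots,x_n^2)$ have the same Hilbert function, the displayed inequality for $I_{d+1}$ is equivalent to $|S_1g\cap P_{d+1}|\le d$, provided the monomial ideal gains exactly $n-d$ new monomials in degree $d+1$. This is true: the products $x_jx_1\cdots x_d$ with $j>d$ are squarefree and distinct, while those with $j\le d$ already lie in $(x_i^2)$. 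The closed formula then follows from $|(x_1^2,\dots,x_n^2)_{d+1}|=\binom{n+d}{d+1}-\binom{n}{d+1}$.

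\textbf{Reformulation via annihilators.} Every element of $S_1g$ has the form $\ell g$ with $\ell\in S_1$, and $\ell\mapsto\ell g$ is injective. Therefore $S_1g\cap P_{d+1}\cong V:=\{\ell\in S_1:\ \ell g\in P\}=(P:g)_1$. Suppose, for contradiction, that $|V|\ge d+1$. Consider $B=S/(P:g)\cong A/\operatorname{Ann}_A(\bar g)$. Multiplication by $\bar g$ gives $B(-d)\cong \bar gA$. The key point is that $B$ is a graded Artinian Gorenstein algebra of socle degree exactly $n-d$. This follows from the nondegenerate pairing of $A$, whose socle sits in degree $n=N$. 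Since $\bar g\neq 0$ in $A_d$, there is $u\in A_{n-d}$ with $\bar g u\neq 0$, so $B_{n-d}\neq 0$. Also $B_j=0$ for $j>n-d$, since $A_{>n}=0$. Equivalently, by Proposition~\ref{6} applied to $(P:g)$, one reads off the Hilbert function of $B$ directly.

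\textbf{Bounding the socle degree from above.} Put $J=(V)\subseteq (P:g)$ and $S'=S/J$, a polynomial ring in $e=n-|V|\le n-d-1$ variables. Then $B$ is a quotient of $S'/\bar P$, where $\bar P$ is generated by the images of $f_1,\dots,f_n$. The ring $S'/\bar P=A/JA$ is Artinian because $A$ is, so $\bar P$ is primary to the irrelevant ideal of $S'$. Since $K$ is infinite, $e$ general $K$-linear combinations of the images of the $f_i$ form a regular sequence of quadrics in $S'$. The corresponding complete intersection has socle degree $e$ and vanishes in degrees $>e$. Hence $B_j=0$ for $j>e$, and in particular $B_{n-d}=0$ because $n-d>e$. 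This contradicts $B_{n-d}\neq 0$. Therefore $|V|\le d$, which proves $|S_1g\cap P_{d+1}|\le d$.

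\textbf{Main obstacle.} The step needing the most care is the claim $B_{n-d}\neq 0$, that is, that $\bar g$ pairs nontrivially into the socle of $A$. This uses that $A$ is Gorenstein with one-dimensional socle in degree $n$, which holds because $P$ is a complete intersection of quadrics, and that every nonzero homogeneous element of $A$ multiplies into the socle. The remaining steps are general-position facts over an infinite field.
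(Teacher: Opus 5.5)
Your proof is correct. The paper gives no argument of its own here---the proposition is quoted from Francisco \cite[Corollary~5.2]{fran}---so the comparison is between a citation and your self-contained proof.

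Your argument has three steps:
\begin{itemize}
\item You identify $S_1g\cap P_{d+1}$ with $(P:g)_1$.
\item You use the perfect pairing of the complete intersection $A$ to see that $S/(P:g)\cong \bar g A(d)$ is nonzero in degree $n-d$.
\item Killing $r=\dim_K(P:g)_1$ independent linear forms leaves a quotient of a complete intersection of $n-r$ quadrics in $n-r$ variables. That quotient vanishes above degree $n-r$, so $n-d\le n-r$, i.e.\ $r\le d$.
\end{itemize}
This is the same Gorenstein duality that underlies Proposition~\ref{6}. Indeed, the linkage formula applied to $P+(g)$ gives $\mathcal{H}_{S/(P:g)}(n-d)=1$ directly. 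So your proof uses only tools already in the paper, and would make Section~\ref{sec:mainproof} independent of \cite{fran}.

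Your proof also surfaces two points that the citation hides:
\begin{itemize}
\item The statement tacitly needs $g\notin P$. For $g\in P$ and $d<n$ one gets $\dim_K(S_1g\cap P_{d+1})=n>d$. You rightly flag this, and it holds in each application in the paper: in Theorem~\ref{5}, $g,h$ are independent modulo $P$, and Case~1 invokes the proposition for $\ell h$ only after assuming $\ell h\notin P$.
\item The general-position choice of the regular sequence needs $K$ infinite. This is covered by the base-change convention in Section~\ref{sec:prelim}.
\end{itemize}
The citation buys brevity. Your argument gives the result, for every $d\le n$, in exactly the form $\dim_K(P:g)_1\le d$ in which the paper actually uses it.
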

\begin{thm}\label{5}
    Let $S = K[x_1,\dots,x_6]$ be a polynomial ring and $f_1,\dots,f_6 \in S_2$ form a regular sequence of quadrics,
and let $g,h \in S_2$ be two quadrics such that
$\dim_K(f_1,\dots,f_6,g,h)_2 \;= 8$.
Then
\[
\dim_K(f_1,\dots,f_6,g,h)_3 \;\geq\;
\dim_K(x_1^2,\dots,x_6^2, x_1x_2, x_1x_3)_3 \;=\; 43.
\]
\end{thm}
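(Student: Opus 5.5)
The plan is to move everything into the Artinian Gorenstein ring $A=S/P$ and reduce the claim to a statement about a pencil of multiplication maps $A_1\to A_3$. Since $f_1,\dots,f_6$ is a regular sequence of quadrics, $P$ has no linear syzygies. Hence $|P_3|=6\cdot 6=36$, $|A_3|=56-36=20$, and $A_1=S_1$ has dimension $6$. Writing $I=(f_1,\dots,f_6,g,h)$, we have $|I_3|=36+|gA_1+hA_1|$, where $g,h$ now denote their images in $A_2$. These images are linearly independent because $|I_2|=8$. So Theorem~\ref{5} is equivalent to
\[
|gA_1+hA_1|\ \ge\ 7 .
\]
Consider the map $\Phi:A_1\oplus A_1\to A_3$, $(\ell_1,\ell_2)\mapsto \ell_1g+\ell_2h$, and its kernel $R$. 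Then $|gA_1+hA_1|=12-|R|$, and the goal becomes $|R|\le 5$.

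The first input is Proposition~\ref{4} with $d=2$, applied to every member $q=\alpha g+\beta h\ne 0$ of the pencil. It gives $|S_1q\cap P_3|\le 2$, i.e.\ $|\Ann_{A_1}(q)|\le 2$ and $|qA_1|\ge 4$. Projecting $R$ to its first coordinate gives
\[
|R|=|\Ann_{A_1}(h)|+|\{\ell_1\in A_1:\ \ell_1g\in hA_1\}|\le 2+|\{\ell_1:\ \ell_1 g\in hA_1\}|.
\]
So it suffices to show that the space $W=\{\ell_1:\ell_1g\in hA_1\}$ has dimension at most $3$, after choosing the basis $g,h$ of the pencil suitably (for instance with $h$ of minimal annihilator). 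I would organize this via the Kronecker canonical form of the $6\times 20$ matrix pencil $sM_g+tM_h$, where $M_q$ is multiplication by $q$. Elements of $R$ correspond to the kernel of the block matrix $[M_g\ M_h]$, which is governed by the column minimal indices and the regular part of the pencil. The condition that every nonzero member has rank $\ge 4$ constrains this form: an eigenvalue block contributes to $\Ann$ of a special member, and this is bounded by $2$.

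The main obstacle is the case $|R|\ge 6$ with every member of the pencil having annihilator of dimension at most $2$. Linear algebra alone then allows it: ranks $\ge4$ on a $6$-dimensional source do not force $|\mathrm{im}\,M_g+\mathrm{im}\,M_h|\ge7$. Extra structure of $A$ must be used, and I would try two devices. First, Gorenstein duality: the pairing $A_3\times A_3\to A_6\cong K$ identifies $|gA_1+hA_1|\ge 7$ with the statement that $\{u\in A_3: ug=uh=0\}$ has dimension $\le 13$. Equivalently, by Proposition~\ref{6} with $N=6$, $j=3$, the link $J=P:I$ satisfies $\mathcal{H}_{S/J}(3)\ge 7$. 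This lets one exchange $(g,h)$ for degree-$3$ information about the linked ideal. Second, if $|R|\ge6$, then $\ell_1g=-\ell_2h$ for many independent pairs. Multiplying such relations against each other produces new elements of $\Ann(g)$ or $\Ann(h)$ in degree $\ge2$, or forces some $\ell$ with $\ell g,\ell h\in$ a small subspace. I would then show this contradicts Proposition~\ref{4} applied to a suitable member of the pencil, or, when a member of $I_2$ becomes a product of linear forms modulo $P$, reduce via Theorem~\ref{3} to the known five-variable case \cite{caviglia}.

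Finally, the value $43$ is attained by $(x_1^2,\dots,x_6^2,x_1x_2,x_1x_3)$: there $gA_1+hA_1$ is spanned by $x_1x_2x_j$ and $x_1x_3x_j$, which gives $4+3=7$ cubic monomials. This confirms the bound is sharp and suggests that the extremal configurations are those where $g$ and $h$ share a linear factor modulo $P$, which is the case the above case analysis should isolate.
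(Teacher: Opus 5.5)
Your reformulation is sound. You correctly get $|I_3|=36+|gA_1+hA_1|$, so the goal is $|R|\le 5$. The splitting $|R|=|\Ann_{A_1}(h)|+|W|$ is right, and Proposition~\ref{4} with $d=2$ does give $|\Ann_{A_1}(q)|\le 2$ for every nonzero $q$ in the pencil. You also rightly note that this plus linear algebra cannot finish. For instance, $M_h=M_gD$ with $D$ diagonal with distinct entries has every member of kernel dimension $\le 1$, yet $|R|=6$. But that is exactly where the proposal stops being a proof. ``Multiplying relations against each other'', ``Proposition~\ref{4} applied to a suitable member of the pencil'', and the duality restatement $\mathcal{H}_{S/(P:I)}(3)\ge 7$ are directions, not arguments. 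The central inequality $|R|\le 5$ is never established.

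The missing ingredients are the following.

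\emph{Reduction.} Using Theorem~\ref{3} with the five-variable EGH case \emph{and} Proposition~\ref{2}, dispose of the case that $I$ \emph{or} $(P:I)$ contains a reducible quadric. You only mention the $I$ side, but the argument below ends on the $(P:I)$ side.

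\emph{Some member has a nonzero annihilator in $A_1$.} Call that member $h$; this is the opposite of your suggestion to take $h$ with minimal annihilator. Pick $0\neq\ell\in\Ann_{A_1}(h)$. For $\ell_1\in W$ we have $\ell_1g\in hA_1$, hence $\ell_1(\ell g)\in P_4$. If $\ell g\notin P$, apply Proposition~\ref{4} with $d=3$ to the \emph{cubic} $\ell g$, not to a quadric of the pencil. This gives $|W|\le 3$, so $|R|\le 2+3$. If $\ell g\in P$, then $\ell\in(P:I)_1$, and $\ell S_1\subseteq(P:I)_2$ consists of reducible quadrics, which was excluded.

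\emph{No member has a nonzero annihilator.} Here your target $|W|\le 3$ is the wrong one. Now $|W|=|R|$, and you only need $|W|\le 5$, i.e.\ to rule out $gA_1=hA_1$. This follows from an eigenvector argument. If $gA_1=hA_1$ with $M_g,M_h$ injective, then $\ell h=(T\ell)g$ defines an automorphism $T$ of $A_1$. An eigenvector $Tu=cu$ gives $u(h-cg)=0$ in $A$ with $h-cg\neq 0$, a contradiction. In your Kronecker language: injectivity of every member excludes $L_\epsilon$ blocks and a regular part, and then $|R|=6-\#\{L^T_\eta\text{ blocks}\}\le 5$. Note that each $L^T_\eta$ block contributes $\eta-1$ to $|R|$, contrary to your remark that $R$ is governed only by the column indices and the regular part.
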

\begin{proof}
Set $P=(f_1,\dots,f_6)$, $I=P+(g,h)$, and $A=S/P$.
If $I$ or $(P:I)$ contains a reducible quadric, then the theorem is true by Proposition~\ref{2} and Theorem~\ref{3}, since the EGH conjecture is true in the case $n=5$ where the degrees of the regular sequence equal $2$. Assume that neither $I$ nor $(P:I)$ contains a reducible quadric. We have two cases:

\textbf{Case 1:} There exists a nonzero element $q\in (g,h)_2$ such that $\Ann_{A_1}(q)\neq 0$.\\
Without loss of generality, assume $q=g$. We have to show that
\[
\dim_K\bigl(S_1g\cap P_3\bigr)
+
\dim_K\bigl(S_1h\cap (P+(g))_3\bigr)
\leq 5.
\]
By Proposition~\ref{4}, applied with $d=2$,
$\dim_K(S_1g\cap P_3)\leq 2$. Thus it is sufficient to prove that
$\dim_K(S_1h\cap(P+(g))_3)\leq 3$. Assume, to the contrary, that this dimension is at least $4$. Then there exist linearly independent linear forms $\ell_1,\dots,\ell_4\in S_1$ such that $\ell_i h\in P+(g)$ for all $1\leq i\leq 4$. Choose
$0\neq\ell\in\Ann_{A_1}(g)$, so that $\ell g\in P$. Multiplying the relations $\ell_i h\in P+(g)$ by $\ell$ gives
\[
\ell_i(\ell h)\in P_4
\qquad\text{for all }1\leq i\leq 4.
\]
If $\ell h\notin P$, Proposition~\ref{4}, applied to the cubic $\ell h$, gives
\[
\dim_K\bigl(S_1(\ell h)\cap P_4\bigr)\leq 3,
\]
contradicting the four linearly independent elements $\ell_i(\ell h)$ above. Hence $\ell h\in P$, and therefore $\ell\in(P:I)$. Consequently, $\ell m\in(P:I)_2$ for every $m\in S_1$, so $(P:I)$ contains a reducible quadric, a contradiction.

\textbf{Case 2:} For every nonzero element $q\in (g,h)_2$, $\Ann_{A_1}(q)=0$.\\
Again we have to show that
\[
\dim_K\bigl(S_1g\cap P_3\bigr)
+
\dim_K\bigl(S_1h\cap(P+(g))_3\bigr)
\leq 5.
\]
In this case $\dim_K(S_1g\cap P_3)=0$, so it remains to prove that
$\dim_K(S_1h\cap(P+(g))_3)\leq 5$. Assume, to the contrary, that this dimension is $6$. Hence
\[
x_i h=p_i+\ell_i g,
\]
where $p_i\in P$ and $\ell_i\in S_1$ for all $1\leq i\leq 6$. We show that $\{\ell_1,\dots,\ell_6\}$ is linearly independent. If $\sum_{i=1}^6\alpha_i\ell_i=0$, then
\[
\left(\sum_{i=1}^6\alpha_i x_i\right)h\in P,
\]
and the assumption implies that $\alpha_i=0$ for all $i$.

Define an isomorphism $T:S_1\longrightarrow S_1$ by $T(x_i)=\ell_i$. Since $K$ is algebraically closed, $T$ has an eigenvector $0\neq u=\sum_{i=1}^6\beta_i x_i$ with $T(u)=cu$ for some $c\neq 0$. Therefore,
\begin{align*}
uh
&=\sum_{i=1}^6\beta_i p_i
  +g\sum_{i=1}^6\beta_i\ell_i\\
&=\sum_{i=1}^6\beta_i p_i+cug,
\end{align*}
so $u(h-cg)\in P$. Since $g$ and $h$ are linearly independent modulo $P$, the element $h-cg$ is nonzero in $A_2$, contradicting the assumption. 
\end{proof}

\begin{thm}
Let $S = K[x_1, \dots, x_6]$, and let $I =(f_1, \dots, f_6, g ) \subset S$ be an almost complete intersection generated by quadrics, where $f_1, \dots, f_6$ form a regular sequence and $g \in S_2$. Then $I$ satisfies the Eisenbud--Green--Harris conjecture.
\end{thm}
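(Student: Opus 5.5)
We propose to reduce the statement to finitely many degree checks, using the reformulation recalled in Section~\ref{sec:prelim}: by Clements--Lindstr\"om, $I$ satisfies EGH if and only if for each $0\le t\le N=6$ there is an ideal $L\supseteq(x_1^2,\dots,x_6^2)$ with $|L_t|=|I_t|$ and $|L_{t+1}|\le|I_{t+1}|$. Write $P=(f_1,\dots,f_6)$, $A=S/P$ (Hilbert function $\binom6t$) and $J=(P:I)=(P:g)$. By Proposition~\ref{6},
\[
\mathcal H_{S/I}(j)=\binom6j-\mathcal H_{S/J}(6-j),
\]
and by Proposition~\ref{2} it suffices to check EGH for either $I$ or $J$. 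So we only need to control the growth of $I$ in the low degrees $t\le 3$, and obtain the high degrees from $J$ by duality.

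\textbf{Low degrees.} Set $L=(x_1^2,\dots,x_6^2,x_1x_2)$, the lex-plus-powers candidate matching $|I_2|=7$. The comparison in degree $2\to3$ is exactly Proposition~\ref{4} with $n=6,d=2$: it gives $|I_3|\ge|L_3|$. If $g\in P$ the claim is trivial, so assume $g\notin P$. For the next step we would use Macaulay/Clements--Lindstr\"om growth inside $A$, or Proposition~\ref{4}-type bounds applied to the cubics in $I_3$ not in $P$, to show that the lex-plus-powers ideal attaining $|I_3|$ is dominated in degree $4$.

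\textbf{High degrees via linkage.} Since $\mathcal H_{S/I}(j)=\binom6j-\mathcal H_{S/J}(6-j)$, the conditions for $I$ in degrees $t\ge3$ translate into conditions for $J$ in degrees $\le3$. Now $J=\Ann_A(g)$ lifted to $S$. There are two cases.
\begin{itemize}
\item[(a)] $J$ or $I$ contains a reducible quadric, for example when $\Ann_{A_1}(g)\ne0$, in which case $J$ contains $\ell S_1$. Here we apply Theorem~\ref{3} together with the known EGH for five quadrics \cite[Theorem 4.14]{caviglia}, exactly as in the first paragraph of the proof of Theorem~\ref{5}.
\item[(b)] $\Ann_{A_1}(g)=0$. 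Then $gA_1$ has dimension $6$, so $\mathcal H_{S/J}(0)=1$, $\mathcal H_{S/J}(1)=6$, and $J_2/P_2=\Ann_{A_2}(g)$. By the symmetry $\mathcal H_{A/gA}(j)$ versus $\mathcal H_{S/J}(4-j)$, $|J_2|$ is determined by $|I_4|$. We then need $J_2$ and $J_3$ to satisfy the lex-plus-powers growth condition; Proposition~\ref{4} (and, when $J_2\supsetneq P_2$ contains two further independent quadrics, Theorem~\ref{5}) supply the required lower bounds on $|J_3|$.
\end{itemize}

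\textbf{Main obstacle.} The hardest step will be case (b) in the middle degree $t=3\to4$, where $J_2$ may contain several quadrics outside $P$. Theorem~\ref{5} only controls two extra quadrics, so we will need an inductive or dimension-counting argument on $\dim J_2/P_2$. First we would try bounding $\dim\Ann_{A_2}(g)$ using the Gorenstein pairing $A_2\times A_4\to A_6$ and the injectivity of multiplication by $g$ on $A_1$. Failing that, we would use the eigenvector trick of Case 2 in Theorem~\ref{5} to produce a linear form annihilating some quadric of $J$, which pushes us back into case (a).
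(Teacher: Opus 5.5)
Your framework matches the paper's: the Clements--Lindstr\"om reformulation, linkage via Proposition~\ref{6}, handling reducible quadrics in $I$ or $J=(P:I)$ via Proposition~\ref{2}, Theorem~\ref{3} and the five-variable case, and $t=2$ via Proposition~\ref{4}. But the one nontrivial step, $t=3\to4$, is left open with only strategies to try, so the proposal is not yet a proof.

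It helps to see what that step reduces to. In your case (b), $S_1g\cap P_3=0$, since $\ell g\in P$ means $\ell\in\Ann_{A_1}(g)$. So $|I_3|=36+6=42$ is forced. You computed $\mathcal H_{S/J}(1)=6$, and this gives $\mathcal H_{S/I}(5)=\binom65-6=0$, i.e.\ $I_5=S_5$. That settles $t=4,5$ immediately, and you should state it explicitly. For $t=3$, the comparison ideal is $(x_1^2,\dots,x_6^2)$ plus the first six squarefree cubics in lex order, with $|L_4|=120$. Multiplication by $g$ maps $J_2$ isomorphically onto $S_2g\cap P_4$, so $|I_4|=111+21-|S_2g\cap P_4|=132-|J_2|$. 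The entire step is therefore the upper bound $|J_2|\le12$, i.e.\ $\dim\Ann_{A_2}(g)\le6$. Note that $|J_3|=50$ is already determined here, so lower bounds on $|J_3|$ such as Theorem~\ref{5} are beside the point.

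The missing idea is to combine the case hypothesis with a dimension count. By case (a), you may assume $J$ contains no reducible quadric. The reducible quadrics in six variables form a $10$-dimensional closed subvariety of $\mathbb{P}(S_2)\cong\mathbb{P}^{20}$, namely the image of $\mathbb{P}^5\times\mathbb{P}^5$. Over an algebraically closed $K$, every linear subspace of $S_2$ of dimension at least $11$ therefore contains a reducible quadric. Hence $|J_2|\ge13$ is impossible, and $|I_4|\ge120$ follows.

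Neither of your fallbacks supplies this bound as described. The Gorenstein pairing only says that $g\colon A_2\to A_4$ is self-dual between two $15$-dimensional spaces. You give no mechanism by which injectivity of $g$ on $A_1$ would bound its kernel on $A_2$. The eigenvector trick of Theorem~\ref{5} needs six relations $x_ih=p_i+\ell_ig$ defining a linear map $S_1\to S_1$, and nothing analogous is available here.
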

\begin{proof}
    Set $P=(f_1,\dots,f_6)$ and $A=S/P$. We need to show that for every $0\leq t\leq 6$, there exists an ideal $L\supseteq (x_1^2,\dots,x_6^2)$ such that $|I_t|=|L_t|$ and $|L_{t+1}|\leq |I_{t+1}|$. The cases $t=0,1,6$ are trivial. For $t=0,1$ we take $L=(x_1^2,\dots,x_6^2)$. For $t=6$ we take $L$ to be the ideal generated by $x_1^2,\dots,x_6^2$ together with all monomials in $S_6$, since $I_6=S_6$. 
    
    As in the proof of Theorem~\ref{5}, we may assume that neither $I$ nor $(P:I)$ contains a reducible quadric. Then $(P:I)_1=0$; otherwise, a nonzero linear form in $(P:I)$ multiplied by any nonzero linear form would give a reducible quadric in $(P:I)_2$. Hence $\mathcal{H}_{S/(P:I)}(1)=6$. By Proposition~\ref{6},
    \[
    \mathcal{H}_{S/I}(5)=\mathcal{H}_{A}(5)-6=0.
    \]
    Thus $I_5=S_5$. Hence, for $t=4$ we may take any monomial ideal $L$ containing $x_1^2,\dots,x_6^2$ with $|L_4|=|I_4|$ and for $t=5$, we choose $L$ to be the ideal generated by $x_1^2,\dots,x_6^2$ together with all monomials in $S_5$.

    If $t=2$, then the assertion follows from Proposition~\ref{4}, applied with $d=2$.
    Assume that $t=3$. Since $(P:I)$ contains no reducible quadric, we have
    $S_1g\cap P_3=0$; otherwise, a nonzero $\ell\in S_1$ with $\ell g\in P$ would belong to $(P:I)_1$ and would produce a reducible quadric in $(P:I)_2$. Therefore,
    \[
    I_3=P_3\oplus S_1g
    \]
    and $|I_3|=36+6=42$. Let $L$ be the ideal generated by $x_1^2,\dots,x_6^2$ and the monomials \[x_1x_2x_3,x_1x_2x_4,x_1x_2x_5,x_1x_2x_6,x_1x_3x_4,x_1x_3x_5\,.\]
    Clearly, $|L_3|=42$. Note that
    \begin{align*}
        |L_4|&=|(x_1^2,\dots,x_6^2)_4|+9\\
        &=|S_4|-{6 \choose 4}+9\\
        &={9 \choose 5}-{6 \choose 4}+9\\
        &=120.
    \end{align*} 
  We show that $|I_4|\geq 120$. Assume, to the contrary, that $|I_4|\leq 119$. Thus 
  \[
  |(f_1,\dots,f_6)_4|+|S_2g|-|S_2g\cap (f_1,\dots,f_6)_4|\leq 119
  \]
  i.e. $|S_2g\cap P_4|\geq 13$. Since $P:I=P:g$, multiplication by $g$ identifies $(P:I)_2$ with $S_2g\cap P_4$. Hence $|(P:I)_2|\geq 13$. 

It is well known that the locus of reducible quadrics in 
$\mathbb{P}^{\binom{n+1}{2} - 1}$— that is, the projective space of all quadratic forms in $n$ variables— 
forms a closed algebraic subset of dimension $2n-2$.
This subset corresponds to all quadrics that factor as a product of two linear forms.

In particular, for $n = 6$, this locus has dimension $10$.
Therefore, any linear subspace of $\Sym^2(K^6)$ of dimension at least $11$ must intersect the reducible locus, meaning it necessarily contains at least one reducible quadric. 

This contradicts the assumption that $(P:I)_2$ contains only irreducible quadrics, and completes the proof.
\end{proof}

\end{document}